\renewcommand{\thefootnote}{\fnsymbol{footnote}}
\long\def\sfootnote[#1]#2{\begingroup
\def\thefootnote{\fnsymbol{footnote}}\footnote[#1]{#2}\endgroup}
\newtheorem{theorem}{Theorem}[section]
\newtheorem{definition}[theorem]{Definition}
\newtheorem{proposition}[theorem]{Proposition}
\newenvironment{proof}{\noindent\mbox{\bf Proof.}}
{\hfill\mbox{\ding{111}}\bigskip}
\begin{document}

\pagestyle{fancy}
\lhead[page \thepage \ (of \pageref{LastPage})]{}
\chead[{\bf  Theoremizing  Yablo's Paradox}]{{\bf  Theoremizing  Yablo's Paradox}}
\rhead[]{page \thepage \ (of \pageref{LastPage})}
\lfoot[\copyright\ {\sf Ahmad Karimi \& Saeed Salehi 2014}]
{$\varoint^{\Sigma\alpha\epsilon\epsilon\partial}_{\Sigma\alpha\ell\epsilon\hslash\imath}
\centerdot${\footnotesize {\rm ir}}}
\cfoot[{\footnotesize {\tt  }}]{{\footnotesize {\tt  }}}
\rfoot[$\varoint^{\Sigma\alpha\epsilon\epsilon\partial}_{\Sigma\alpha\ell\epsilon\hslash\imath}\centerdot$
{\footnotesize {\rm ir}}]{\copyright\ {\sf Ahmad Karimi \& Saeed Salehi 2014}}
\renewcommand{\headrulewidth}{1pt}
\renewcommand{\footrulewidth}{1pt}
\thispagestyle{empty}

\begin{center}
\begin{table}
\hspace{0.75em}
\begin{tabular}{| c | l  || l | c |}
\hline
 \multirow{7}{*}{\includegraphics[scale=0.65]{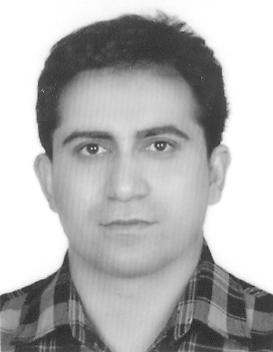}}&    &  &
 \multirow{7}{*}{ \ \ } \ \ \ \ \\
 &     \ \ {\large{\sc Ahmad Karimi}}  \ \  \ & \ \    Tel: \, +98 (0)919 510 2790     &  \\
 &   \ \ Department of  Mathematics \ \  \ & \ \ Fax: \ +98 (0)21 8288 3493    & \\
 &   \ \ Tarbiat Modares University  \ \ \  & \ \ E-mail: \!\!{\tt  a.karimi40}{\sf @}{\tt  yahoo.com}   &  \\
 &  \ \ P.O.Box  14115--134  \ \ \ &   \ \  Behbahan KA Univ. of  Tech.    &  \\
 &   \ \ Tehran, IRAN \ \ \ &  \ \   61635--151 Behbahan, IRAN &   \\
 &    &  &  \\
 \hline
\hline
 \multirow{7}{*}{\includegraphics[scale=0.4]{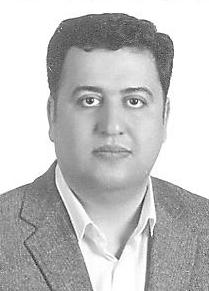}}&    &  &
 \multirow{7}{*}{${\huge \varoint^{\Sigma\alpha\epsilon\epsilon\partial}_{\Sigma\alpha\ell\epsilon\hslash\imath}\centerdot}${{\rm ir}}} \\
 &     \ \ {\large{\sc Saeed Salehi}}  \ \  \ & \ \    Tel: \, +98 (0)411 339 2905    &  \\
 &   \ \ Department of Mathematics \ \  \ & \ \ Fax: \ +98 (0)411 334 2102   & \\
 &   \ \ University of Tabriz \ \ \  & \ \ E-mail: \!\!{\tt /root}{\sf @}{\tt SaeedSalehi.ir/}   &  \\
 &  \ \ P.O.Box 51666--17766 \ \ \ &   \ \ \ \ {\tt /SalehiPour}{\sf @}{\tt TabrizU.ac.ir/}   &  \\
 &   \ \ Tabriz, IRAN \ \ \ & \ \ Web: \  \ {\tt http:\!/\!/SaeedSalehi.ir/}  &  \\
 &    &  &  \\
 \hline
\end{tabular}
\end{table}
\end{center}

\vspace{1.5em}

\begin{center}
{\bf {\Large Theoremizing  Yablo's Paradox}}
\end{center}

\vspace{1.5em}

\begin{abstract}
To counter a general belief that all the paradoxes stem from a kind
of circularity (or involve some self--reference, or use a diagonal
argument) Stephen Yablo designed a paradox in 1993 that seemingly
avoided self--reference. We turn Yablo's paradox, the most
challenging paradox in the recent years,  into a genuine mathematical theorem in
Linear Temporal  Logic (LTL). Indeed, Yablo's paradox comes in several varieties; and he showed in 2004 that
there are other versions   that are equally paradoxical.
Formalizing these versions of Yablo's paradox, we prove some theorems in LTL.
This is the  first time that Yablo's paradox(es)
become  new(ly discovered) theorems in mathematics and logic.
\bigskip

\bigskip


\centerline{${\backsim\!\backsim\!\backsim\!\backsim\!\backsim\!\backsim\!\backsim\!
\backsim\!\backsim\!\backsim\!\backsim\!\backsim\!\backsim\!\backsim\!
\backsim\!\backsim\!\backsim\!\backsim\!\backsim\!\backsim\!\backsim\!
\backsim\!\backsim\!\backsim\!\backsim\!\backsim\!\backsim\!\backsim\!
\backsim\!\backsim\!\backsim\!\backsim\!\backsim\!\backsim\!\backsim\!
\backsim\!\backsim\!\backsim\!\backsim\!\backsim\!\backsim\!\backsim\!
\backsim\!\backsim\!\backsim\!\backsim\!\backsim\!\backsim\!\backsim\!
\backsim\!\backsim\!\backsim\!\backsim\!\backsim\!\backsim\!\backsim\!
\backsim\!\backsim\!\backsim\!\backsim\!\backsim\!\backsim\!\backsim\!
\backsim\!\backsim\!\backsim\!\backsim\!\backsim\!\backsim\!\backsim\!
\backsim\!\backsim\!\backsim}$}

\bigskip

\noindent {\bf 2010 Mathematics Subject Classification}:  {03B44}  $\wr$ {03A05}.

\noindent {\bf Keywords}:  Yablo's Paradox $\wr$ Linear Temporal Logic.
\end{abstract}

\bigskip

\medskip

\vfill

\hspace{.75em} \fbox{\textsl{\footnotesize Date: 01.06.14  (01 June 2014)}}

\vfill

\bigskip
\noindent\underline{\centerline{}}
\centerline{page 1 (of \pageref{LastPage})}


\newpage
\setcounter{page}{2}
\SetWatermarkAngle{65}
\SetWatermarkLightness{0.925}
\SetWatermarkScale{2.7}
\SetWatermarkText{\!\!\!\!\!\!\!\!\!\!\!\!\!\!\!\!\!\!\!\!\!\!
{\sc \,  Copyrighted Manuscript}}



\section{Introduction}\label{intro}
Paradoxes are interesting puzzles  in philosophy and mathematics.
They can be more  interesting when they turn into genuine theorems.
For example, Russell's paradox  which collapsed Frege's foundations
of mathematics,  is now a classical theorem in set theory, implying
that no set of all sets can exist. Or, as another example, the Liar
paradox has turned  into Tarski's theorem on the  undefinability of
truth in sufficiently rich languages. This  paradox also appears
implicitly in the proof of G\"odel's first incompleteness theorem.
For this particular theorem, some other paradoxes such as Berry's
(\cite{boolos89, chaitin74}) or Yablo's (\cite{yab1,yab2}) have been used to give
alternative proofs (\cite{cieslinski2,leach13}). A more recent example  is the
surprise examination paradox \cite{chow98} that has turned into a beautiful proof for G\"odel's second
incompleteness theorem (\cite{kritchman10}).

In this paper we transform Yablo's paradox into a theorem
in the Linear Temporal Logic. This paradox, which is the first one
of its kind that supposedly avoids self--reference and circularity
  has been
used for proving an old theorem (\cite{cieslinski2,leach13}) but not a new theorem had
been made out of it. In this paper, for the very first time, we use
this paradox  (actually its argument)   for proving some genuine mathematical  theorem in
Linear Temporal Logic. Roughly speaking, we show that certain
operators do not have fixed--points in this logic, where the proof
is exactly Yablo's paradox (reaching to a contradiction by assuming
the existence of certain fixed--point sentences). Let us note that very many
other operations in the Linear Temporal Logic do have fixed--points,
which constitute some other genuine mathematical theorems.

\section{Yablo's Paradox}\label{sect2}
To counter a general belief that all the paradoxes stem from a kind
of circularity (or involve some self--reference, or use a diagonal
argument) Stephen Yablo designed a paradox in 1993 that seemingly
avoided self--reference (\cite{yab2, yab1}).
Let us  fix  our reading of Yablo's Paradox: Consider the sequence
of sentences $\{\mathcal{Y}_n\}_{n\in\mathbb{N}}$ such that for each
$n\in\mathbb{N}$: \qquad\qquad {$\mathcal{Y}_n \Longleftrightarrow
\forall k>n\ (\mathcal{Y}_k$ is not true).}

The paradox follows from the following deductions. For each
$n\in\mathbb{N}$,

\begin{tabular}{ccl}
\qquad $\mathcal{Y}_n$ & $\Longrightarrow$ &  $\forall k>n\ (\mathcal{Y}_k$ is not true)\\
\qquad  & $\Longrightarrow$ & $(\mathcal{Y}_{n+1}$ is not true)  and
 $\forall k>n+1\ (\mathcal{Y}_k$ is not true) \\
\qquad  & $\Longrightarrow$ & $(\mathcal{Y}_{n+1}$ is not true)  and
 $(\mathcal{Y}_{n+1}$ is true),  \\
\end{tabular}

\noindent thus $\mathcal{Y}_n$ is not true. So,
{$\forall k\ (\mathcal{Y}_k$ is not true),}
and in particular {$\forall k>0\ (\mathcal{Y}_k$
is not true),} and so $\mathcal{Y}_0$ must be true (and not true at
the same time); contradiction!

Some paradoxes turn  into mathematical--logical tautologies and so
become (interesting) theorems. For example, Liar's paradox when
translated into first--order logic is a sentence $L$ such that
$L\leftrightarrow\neg L$. The fact that this is contradictory is
equivalent to the fact that the formula
$\neg\big(\varphi\leftrightarrow\neg\varphi\big)$ is a tautology in
propositional logic. As another less trivial paradox, take Russell's
paradox: there can be no set $S$  such that for every $x$ we have
$x\in S\leftrightarrow x\not\in x$. Writing this in first--order
logic (in the language $\{\in\}$) we have a logical theorem:
$\neg\exists y\forall x (x\in y\leftrightarrow x\not\in x)$. Indeed, this first--order logical tautology still
holds when we replace the membership relation $\in$ with an
arbitrary binary relation $R$: the sentence
$\neg\exists y \forall x (xRy\leftrightarrow \neg xRx)$ is again a
first--order logical tautology. On the other hand if $xRy$ is
interpreted as ``$y$ shaves $x$'' then the above tautology is nothing
but Barber's  Paradox.   As for Yablo's paradox, J.~Ketland
 has translated it into first--order  logic   (called Uniform
 Homogeneous Yablo Scheme) in \cite{ket05}:
\newline\centerline{$({\sf Y}):\;\forall x\big(\varphi(x)\leftrightarrow\forall
 y[xRy\rightarrow\neg\varphi(y)]\big)$,}
 \noindent where $R$ is a binary
 formula (which could be a binary relation symbol, i.e. an atomic
 formula) with the auxiliary axioms stating that $R$ is total and
 transitive:
 \newline\centerline{ $({\sf A_1}):\;\forall x\exists y(xRy)$ \  and \quad
 $({\sf A_2}):\;\forall x,y,z (xRyRz\rightarrow xRz)$.}
 \noindent A Yablo-like argument can show that
  the formula  $\neg({\sf Y}\wedge{\sf A_1}\wedge{\sf A_2})$
 is a first--order tautology.

\section{Linear Temporal Logic}


Here, we show that there is another way to have a formal version of
Yablo's paradox (different from the formalized version discussed
above), and that is in Linear Temporal Logic. The (propositional)
linear temporal logic (LTL) is a logical formalism that can refer
to time; in LTL  one can encode formulae about the future, e.g., a
condition will eventually be true, a condition will be true until
another fact becomes true, etc. LTL was first proposed for the
formal verification of computer programs in 1977 by Amir Pnueli
\cite{pnueli77}. For a modern introduction to LTL and its syntax and
semantics see e.g. \cite{temporal}. Two modality operators in LTL that we
will use are the ``next'' modality denoted by $\Circle$ and the ``always''
modality denoted as $\Box$.

\subsection{Syntax and Semantics of LTL}
We assume the reader is familiar with the general framework of LTL, but for the sake of accessibility, we list the main notations, definitions and theorems which will be referred to later on. For details we refer the reader to \cite{temporal}.   Let $\textbf{V}$ be a set of \textit{propositional constants}. The alphabet of a basic language
${\cal{L}}_{\rm LTL}(\textbf{V})$ (also shortly: ${\cal{L}}_{\rm LTL}$) of propositional linear temporal logic LTL is given by

\quad all propositional constants of $\textbf{V}$ and the symbols $\{\textbf{false},\rightarrow,\Circle,\Box,(,)\}$.

\noindent The inductive definition of formulas (of ${\cal{L}}_{\rm LTL}(\textbf{V})$) is as follows: 

1---Every propositional constant of $\textbf{V}$ and also the constant symbol $\textbf{false}$ is a formula.

2---If $\varphi$ and $\psi$ are formulas then $(\varphi \rightarrow \psi)$ is a formula.

3---If $\varphi$ is a formula then $\Circle \varphi$ and $\Box \varphi$ are formulas.

\noindent Further operators can be introduced as abbreviations:
\newline\centerline{$\neg,  \vee, \wedge, \leftrightarrow,  \textbf{true}$ as in classical logic, and
$\diamondsuit \varphi \equiv \neg\Box\neg \varphi$.}
The temporal operators $\Circle, \Box,$ and $\diamondsuit$ are called \textit{next time}, \textit{always (or henceforth)}, and
\textit{sometime (or eventuality)} operators, respectively. Formulas $\Circle \varphi$, $\Box \varphi$, and $\diamondsuit \varphi$ are
typically read ``next $\varphi$'', ``always $\varphi$'', and ``sometime $\varphi$''.

Semantical interpretations in classical propositional logic are given by Boolean
valuations. For LTL we have to extend this concept according to our informal idea
that formulas are evaluated over sequences of states (time scales). Let $\textbf{V}$ be a set of propositional constants. A \textit{temporal (or Kripke) structure} for
$\textbf{V}$ is an infinite sequence ${\cal{K}} = (\eta_0,\eta_1,\eta_2,...)$ of mappings
$\eta_i : \textbf{V} \rightarrow \{\frak{ff}, \frak{tt}\}$
called \textit{states}, and  $\eta_0$ is called the \textit{initial state} of ${\cal{K}}$. Observe that states are just valuations in
the classical logic sense. For ${\cal{K}}$ and $i\in {\cal{K}}$, we define ${\cal{K}}_i(F)\in \{\frak{ff},\frak{tt}\}$ (informally
meaning the ``truth value of $F$ in the $i^{\rm th}$ state of ${\cal{K}}$'') for every formula $F$ inductively
as follows:

01.~ ${\cal{K}}_i(v) = \eta_i(v)$ ~~for $v\in \textbf{V}$.

02.~ ${\cal{K}}_i(\textbf{false}) = \frak{ff}$.

03.~ ${\cal{K}}_i(\varphi\rightarrow \psi) = \frak{tt}  \iff {\cal{K}}_i(\varphi)=\frak{ff}$ ~or~ ${\cal{K}}_i(\psi)=\frak{tt}$.

04.~ ${\cal{K}}_i(\Circle \varphi)= {\cal{K}}_{i+1}(\varphi)$.

05.~ ${\cal{K}}_i(\Box \varphi)=\frak{tt} \iff  {\cal{K}}_j(\varphi)=\frak{tt}$ ~for every $j\geq i$.

\noindent Obviously, the formula $\textbf{false}$ and the operator $\rightarrow$ behave classically in each state. The
definitions for $\Circle$ and $\Box$ make these operators formalize the phrases {\it in the next state}
and {\it from this step onward}. More precisely, the formula $\Box \varphi$ informally
means ``$\varphi$ holds in all forthcoming states including the present one''.
The definitions induce the following truth values for the formula abbreviations:

06.~ ${\cal{K}}_i(\neg \varphi)=\frak{tt} \iff {\cal{K}}_i(\varphi)=\frak{ff}$.

07.~ ${\cal{K}}_i(\varphi \vee \psi) =\frak{tt} \iff {\cal{K}}_i(\varphi)=\frak{tt}$ ~or~ ${\cal{K}}_i(\psi)=\frak{tt}$.

08.~ ${\cal{K}}_i(\varphi \wedge \psi)=\frak{tt} \iff {\cal{K}}_i(\varphi)=\frak{tt}$ ~and~ ${\cal{K}}_i(\psi)=\frak{tt}$.

09.~ ${\cal{K}}_i(\varphi\leftrightarrow \psi)=\frak{tt} \iff {\cal{K}}_i(\varphi)={\cal{K}}_i(\psi)$.

10.~ ${\cal{K}}_i(\textbf{true})=\frak{tt}$.

11.~ ${\cal{K}}_i(\diamondsuit \varphi)=\frak{tt} \iff  {\cal{K}}_j(\varphi)=\frak{tt}$ ~for some $j\geq i$.

\begin{definition}[\cite{temporal}] A formula $\varphi$ of ${\cal{L}}_{LTL}(\textbf{V})$ is called \textit{valid} in the temporal structure ${\cal{K}}$ for
$\textbf{V}$ (or ${\cal{K}}$ satisfies $\varphi$), denoted by $\models_{{\cal{K}}}\varphi$, if ${\cal{K}}_i(\varphi)=\frak{tt}$ for every $i\in \Bbb{N}$. The formula $\varphi$ is called a
consequence of a set ${\cal{F}}$ of formulas $({\cal{F}}\models \varphi)$ if $\models_{{\cal{K}}}\varphi$ holds for every ${\cal{K}}$ such that $\models_{{\cal{K}}}\psi$ for all $\psi\in {\cal{F}}$.  The formula $\varphi$ is called \textit{(universally) valid} {\rm (}$\models \varphi${\rm )} if $\emptyset \models \varphi$.
A formula $\varphi$ is called (locally) satisfiable if there is a temporal structure
${\cal{K}}$ and $i\in \Bbb{N}$ such that ${\cal{K}}(\varphi)=\frak{tt}$. 
\hfill $\bigtriangleup\hspace{-0.95em}\blacktriangle$
\end{definition}

The formula $\Circle\varphi$ holds (in the current
moment) when $\varphi$ is true in the ``next step'', and the formula
$\Box\varphi$ is true (in the current moment) when $\varphi$ is true
``now and forever'' (``always in the future''). In the other words,
$\Box$ is the reflexive and transitive closure of $\Circle$. So the
formula $\Circle\Box\psi$ is true when $\psi$ is true from the
next step onward, that is $\psi$ holds in the next step, and the
step after that, and the step after that, etc. The same holds for
$\Box\Circle\psi$; indeed the formula
$\Circle\Box\psi\longleftrightarrow\Box\Circle\psi$ is a law of
$LTL$ (T12 on page 28  of \cite{temporal}). It can also be seen that  the formula $\Circle\neg\varphi\longleftrightarrow\neg\Circle\varphi$ is always true (is a law of LTL, see T1 on page 27 of \cite{temporal}), since $\varphi$ is untrue in the next step if and only if it is not the case that ``$\varphi$ is true in the next step''. Whence, we have the equivalences $\Circle\Box\neg\varphi\longleftrightarrow\Box\Circle\neg\varphi\longleftrightarrow\Box\neg\Circle\varphi$
in LTL. The following theorem will be used in our arguments.

\begin{theorem}[\cite{temporal}]\label{th2.1.9Tempo} {\rm LTL} $\models \varphi$ if and only if $\neg \varphi$ is not satisfiable.
\end{theorem}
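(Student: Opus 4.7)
The plan is to chase the definitions on both sides and show they coincide via a short chain of biconditionals, with the key step being the semantic clause for negation (clause 06 in the excerpt).

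First I would expand the right-hand side. By the definition of satisfiability, $\neg\varphi$ is satisfiable iff there exist a temporal structure $\mathcal{K}$ and an index $i\in\mathbb{N}$ with $\mathcal{K}_i(\neg\varphi)=\mathfrak{tt}$. Hence $\neg\varphi$ is \emph{not} satisfiable iff for every temporal structure $\mathcal{K}$ and every $i\in\mathbb{N}$ one has $\mathcal{K}_i(\neg\varphi)=\mathfrak{ff}$.

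Next I would invoke clause 06, which gives $\mathcal{K}_i(\neg\varphi)=\mathfrak{tt}\iff\mathcal{K}_i(\varphi)=\mathfrak{ff}$, so equivalently $\mathcal{K}_i(\neg\varphi)=\mathfrak{ff}\iff\mathcal{K}_i(\varphi)=\mathfrak{tt}$ (using that the truth values range over the two-element set $\{\mathfrak{ff},\mathfrak{tt}\}$). Substituting this back, non-satisfiability of $\neg\varphi$ becomes: for every $\mathcal{K}$ and every $i\in\mathbb{N}$, $\mathcal{K}_i(\varphi)=\mathfrak{tt}$.

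Finally I would compare this with the left-hand side. By the definition preceding the theorem, $\models_{\mathcal{K}}\varphi$ means $\mathcal{K}_i(\varphi)=\mathfrak{tt}$ for every $i\in\mathbb{N}$, and universal validity $\models\varphi$ (the specialization of $\mathcal{F}\models\varphi$ to $\mathcal{F}=\emptyset$) means $\models_{\mathcal{K}}\varphi$ holds for every temporal structure $\mathcal{K}$. Unfolding, this is exactly: for every $\mathcal{K}$ and every $i\in\mathbb{N}$, $\mathcal{K}_i(\varphi)=\mathfrak{tt}$. This matches the condition obtained above, closing the equivalence in both directions simultaneously.

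There is no real obstacle here; the only subtlety is bookkeeping of quantifiers, noting that negating the existential pair ``$\exists\mathcal{K}\,\exists i$'' produces the universal pair ``$\forall\mathcal{K}\,\forall i$'' that appears in the definition of validity. The proof therefore reduces to an unfolding of definitions and a single application of the two-valued character of the semantics.
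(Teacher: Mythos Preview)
Your argument is correct: it is precisely the definitional unfolding one expects, using the two-valuedness of the semantics and the negation clause to turn ``not satisfiable'' into ``true at every state of every structure,'' which is the unfolded meaning of universal validity.

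There is, however, nothing to compare against: the paper does not supply its own proof of this theorem. It merely quotes the result from the cited textbook (Kr\"{o}ger and Merz) and uses it as a black box in the subsequent arguments. So your proposal is not an alternative to the paper's proof but rather fills in what the paper omits; and what you wrote is exactly the standard justification that the textbook reference would give.
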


\subsection{Paradoxical and Non--Paradoxical  Fixed--Points}\label{}
A version of Yablo's paradox is a sentence $\mathscr{Y}$ that
satisfies the followng equivalences
\newline\centerline{$\mathscr{Y}\!\longleftrightarrow\!\Circle\Box\neg
\mathscr{Y} \quad \big(\!\!\longleftrightarrow\!\Box\Circle\neg\mathscr{Y}\!\longleftrightarrow\!\Box\neg\Circle\mathscr{Y}\big)$}  In
the other words $\mathscr{Y}$ is a fixed--point of the operator
$x\mapsto\Circle\Box\neg x\ \big(\!\!\equiv\Box\Circle\neg x\equiv\Box\neg\Circle x\big)$.
Yablo's argument in his  paradox amounts to showing that this
operator  does not have any fixed--point
in LTL. The semantic proof (i.e. non--existence of any such
fixed--point in  any Kripke model of LTL) is exactly the same as
Yablo's argument. Now, Yablo's paradox becomes the following   theorem.

\begin{theorem}\label{thm-yab1} {\rm LTL} $\models \neg \Box (\varphi\leftrightarrow\Circle\Box\neg\varphi)$.
\end{theorem}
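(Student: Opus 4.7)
The plan is to combine Theorem~\ref{th2.1.9Tempo} with a direct transcription of Yablo's argument into the Kripke semantics of LTL. Since LTL $\models \neg\Psi$ holds iff $\Psi$ is not satisfiable, it suffices to prove that $\Box(\varphi\leftrightarrow\Circle\Box\neg\varphi)$ has no model at any index. So I would assume for contradiction that there exist a temporal structure $\mathcal{K}$ and an $i\in\mathbb{N}$ with $\mathcal{K}_i\bigl(\Box(\varphi\leftrightarrow\Circle\Box\neg\varphi)\bigr)=\frak{tt}$. By clauses 05 and 09 of the semantics this gives the ``uniform biconditional'' $\mathcal{K}_j(\varphi)=\mathcal{K}_j(\Circle\Box\neg\varphi)$ for every $j\geq i$, which is the LTL counterpart of the schema $\mathcal{Y}_n\Longleftrightarrow\forall k{>}n\,(\mathcal{Y}_k\text{ is not true})$ from Section~\ref{sect2}.

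The heart of the proof is the Yablo-style claim that $\mathcal{K}_n(\varphi)=\frak{ff}$ for every $n\geq i$. Suppose otherwise, and let $n\geq i$ satisfy $\mathcal{K}_n(\varphi)=\frak{tt}$. The uniform biconditional at $n$ gives $\mathcal{K}_n(\Circle\Box\neg\varphi)=\frak{tt}$, and clauses 04, 05, 06 unfold this to $\mathcal{K}_k(\varphi)=\frak{ff}$ for all $k\geq n+1$. In particular $\mathcal{K}_{n+1}(\varphi)=\frak{ff}$, so the biconditional at $n+1$ forces $\mathcal{K}_{n+1}(\Circle\Box\neg\varphi)=\frak{ff}$, meaning there is some $k\geq n+2$ with $\mathcal{K}_k(\varphi)=\frak{tt}$. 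But $k\geq n+1$, contradicting what was just derived; this clash is the exact LTL echo of ``$\mathcal{Y}_{n+1}$ is true and not true'' in the informal paradox.

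Having established $\mathcal{K}_n(\varphi)=\frak{ff}$ for every $n\geq i$, applying this to all $k\geq i+1$ and unfolding clauses 04--06 yields $\mathcal{K}_i(\Circle\Box\neg\varphi)=\frak{tt}$. The uniform biconditional at $i$ then produces $\mathcal{K}_i(\varphi)=\frak{tt}$, contradicting the case $n=i$ of the claim, so the initial satisfiability assumption fails; Theorem~\ref{th2.1.9Tempo} then delivers LTL $\models \neg\Box(\varphi\leftrightarrow\Circle\Box\neg\varphi)$. I do not expect a substantive obstacle here: once the prefix $\Box$ is read as ``the equivalence holds at every future index'' (thereby replacing the outer $\forall n$ of Yablo's sequence), the argument is mechanical, and the only care required is bookkeeping of indices when peeling off $\Circle\Box\neg$ so that the two conflicting readings of $\varphi$ collide at the same state.
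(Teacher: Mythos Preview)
Your proposal is correct and follows essentially the same route as the paper: reduce via Theorem~\ref{th2.1.9Tempo} to unsatisfiability of $\Box(\varphi\leftrightarrow\Circle\Box\neg\varphi)$, then run Yablo's argument in the Kripke semantics by first showing that $\mathcal{K}_n(\varphi)=\frak{tt}$ at any $n\geq i$ is impossible and then deriving the final contradiction from the all-$\frak{ff}$ case. The paper merely packages these two steps as a case split rather than as a claim followed by a coda, but the index bookkeeping and the semantic unfoldings are identical.
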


\begin{proof}
To show this formula is valid will exactly follow the line of  Yablo's reasoning to obtain his paradox,  this time in  LTL. By Theorem~\ref{th2.1.9Tempo}, to prove the formula $\neg \Box (\varphi\leftrightarrow\Circle\Box\neg\varphi)$ is valid in LTL, we need to show the formula $\Box (\varphi\leftrightarrow\Circle\Box\neg\varphi)$ is not satisfiable. For a moment assume that there is a Kripke structure ${\cal{K}}$ and $n\in\Bbb{N}$ for which ${\cal{K}}_n\big(\Box(\varphi \leftrightarrow\Circle\Box\neg \varphi)\big) = \frak{tt}$. Then $\forall i\geq n\;{\cal{K}}_i(\varphi \leftrightarrow\Circle\Box\neg \varphi)=\frak{tt}$ which implies that $\forall i\geq n \; {\cal{K}}_i(\varphi) =
 {\cal{K}}_i(\Circle\Box\neg \varphi)= {\cal{K}}_{i+1}(\Box\neg \varphi)$.
We distinguish two cases:

\;(1)\; For some $j\geq n$ we have ${\cal K}_j(\varphi)=\frak{tt}$. Then ${\cal K}_{j+1}(\Box\neg\varphi)=\frak{tt}$   so ${\cal K}_{j+l}(\varphi)=\frak{ff}$ for all $l\geq 1$. In particular ${\cal K}_{j+1}(\varphi)=\frak{ff}$ whence ${\cal{K}}_{j+2}(\Box\neg \varphi)=\frak{ff}$ which is in contradiction with ${\cal K}_{j+1}(\Box\neg\varphi)=\frak{tt}$.

\;(2)\; For all $j\geq n$ we have ${\cal K}_j(\varphi)=\frak{ff}$. So $\frak{ff}={\cal{K}}_n(\varphi) =
  {\cal{K}}_{n+1}(\Box\neg \varphi)$ hence there must exist some $i>n$ with ${\cal K}_i(\varphi)=\frak{tt}$ which contradicts (1) above.

\noindent Thus, the formula $\Box (\varphi\leftrightarrow\Circle\Box\neg\varphi)$ cannot be satisfiable in LTL.
\end{proof}

Also, a G\"odel--like argument can show that the operators $x\mapsto\neg\Box
x$ and $x\mapsto\Box\neg x$ cannot have any fixed--points in LTL
as well.

\begin{proposition}
The operators $x\mapsto\neg\Box x$ and $x\mapsto\Box\neg x$ do not
have any fixed--points in {\rm LTL}; i.e. for any formula $\varphi$ we
have ${\rm LTL}\models\neg\Box(\varphi\leftrightarrow\neg\Box\varphi)$ and
${\rm LTL}\models\neg\Box(\varphi\leftrightarrow\Box\neg\varphi)$.
\end{proposition}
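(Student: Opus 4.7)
My plan is to prove both non--existence claims in parallel, following the template of Theorem~\ref{thm-yab1}: use Theorem~\ref{th2.1.9Tempo} to replace each validity claim by the corresponding unsatisfiability claim, and then derive a contradiction from the assumption that some Kripke structure $\mathcal{K}$ and some $n\in\mathbb{N}$ witness satisfiability. Concretely, for the first operator I would assume $\mathcal{K}_n\bigl(\Box(\varphi\leftrightarrow\neg\Box\varphi)\bigr)=\mathfrak{tt}$, so that by clauses (05) and (09) of the semantics we have $\mathcal{K}_i(\varphi)\neq\mathcal{K}_i(\Box\varphi)$ for every $i\geq n$. For the second I would analogously assume $\mathcal{K}_n\bigl(\Box(\varphi\leftrightarrow\Box\neg\varphi)\bigr)=\mathfrak{tt}$ and obtain $\mathcal{K}_i(\varphi)=\mathcal{K}_i(\Box\neg\varphi)$ for every $i\geq n$.

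For the operator $x\mapsto\neg\Box x$, the plan is a two--case analysis on $\mathcal{K}_n(\Box\varphi)$. If $\mathcal{K}_n(\Box\varphi)=\mathfrak{tt}$, then $\mathcal{K}_n(\varphi)=\mathfrak{tt}$ by clause (05), but the fixed--point equivalence at $n$ forces $\mathcal{K}_n(\varphi)=\neg\mathcal{K}_n(\Box\varphi)=\mathfrak{ff}$, a contradiction. If instead $\mathcal{K}_n(\Box\varphi)=\mathfrak{ff}$, then some $j\geq n$ satisfies $\mathcal{K}_j(\varphi)=\mathfrak{ff}$; the equivalence at $j$ now yields $\mathcal{K}_j(\Box\varphi)=\mathfrak{tt}$, and applying clause (05) to this gives $\mathcal{K}_j(\varphi)=\mathfrak{tt}$, contradicting $\mathcal{K}_j(\varphi)=\mathfrak{ff}$.

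For the operator $x\mapsto\Box\neg x$, the plan is a two--case analysis on $\mathcal{K}_n(\varphi)$. If $\mathcal{K}_n(\varphi)=\mathfrak{tt}$, then $\mathcal{K}_n(\Box\neg\varphi)=\mathfrak{tt}$, so by (05) and (06) we get $\mathcal{K}_k(\varphi)=\mathfrak{ff}$ for all $k\geq n$, taking $k=n$ yields the contradiction. If $\mathcal{K}_n(\varphi)=\mathfrak{ff}$, then $\mathcal{K}_n(\Box\neg\varphi)=\mathfrak{ff}$, so some $j\geq n$ satisfies $\mathcal{K}_j(\varphi)=\mathfrak{tt}$; the equivalence at $j$ (valid because $j\geq n$) then forces $\mathcal{K}_j(\Box\neg\varphi)=\mathfrak{tt}$, and applying clauses (05)--(06) at $k=j$ gives $\mathcal{K}_j(\varphi)=\mathfrak{ff}$, a contradiction.

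There is no real obstacle here; the only subtlety — and the reason the authors point out that this is a ``G\"odel--like'' rather than a ``Yablo--like'' argument — is that the fixed--point involves $\Box$ applied \emph{to the current moment} rather than shifted by $\Circle$, so a diagonal at a single index $n$ (or at a single index $j\geq n$) suffices and we do not need the unbounded descent used for $\Circle\Box\neg\varphi$ in Theorem~\ref{thm-yab1}. What must be checked carefully is only that the outer $\Box$ in the assumed satisfiability makes the fixed--point equivalence available at every $i\geq n$, which is needed to re--apply it at the witness $j$ produced in the second case of each argument.
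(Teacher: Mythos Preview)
Your proposal is correct and follows essentially the same semantic route as the paper: reduce validity to unsatisfiability via Theorem~\ref{th2.1.9Tempo}, assume a satisfying $(\mathcal{K},n)$, and exploit the reflexivity of $\Box$ (that $\mathcal{K}_i(\Box\psi)=\mathfrak{tt}$ implies $\mathcal{K}_i(\psi)=\mathfrak{tt}$) to reach a contradiction. Two small differences are worth noting: the paper only writes out the case $x\mapsto\Box\neg x$ (leaving $x\mapsto\neg\Box x$ implicit), and it avoids your case split by observing at once that $\models\Box\neg\varphi\rightarrow\neg\varphi$, so the assumed equivalence forces $\mathcal{K}_i(\varphi)=\mathfrak{ff}$ for \emph{every} $i\geq n$ in a single stroke, after which the contradiction with $\mathcal{K}_n(\Box\neg\varphi)=\mathfrak{ff}$ is immediate; your two cases amount to the same reasoning unfolded at $i=n$ and at $i=j$.
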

\begin{proof}
We show that satisfiability of $\Box(\varphi\leftrightarrow\Box\neg\varphi)$ in LTL leads to a contradiction. For a  moment let
there exist some Kripke structure ${\cal{K}}$ and $n\in\Bbb{N}$ for which ${\cal{K}}_n(\Box(\varphi\leftrightarrow\Box\neg\varphi))=\frak{tt}$. Then for any $i\geq n$ we have ${\cal{K}}_i(\varphi\leftrightarrow\Box\neg\varphi)=\frak{tt}$ whence $\forall i\geq n \;  {\cal{K}}_i(\varphi) = {\cal{K}}_i(\Box\neg \varphi)$.    This already implies that $\forall i\geq n\; {\cal K}_i(\varphi)=\frak{ff}$ (since $\models\Box\neg\varphi\rightarrow\neg\varphi$). Then, in particular, $\frak{ff}={\cal{K}}_n(\varphi)= {\cal{K}}_n(\Box\neg \varphi)$ and so there must exist some $m\geq n$ such that ${\cal K}_m(\neg\varphi)=\frak{ff}$ contradiction! \end{proof}

\noindent Some  other operators like $x\mapsto \Box x$ or $x\mapsto\neg\Circle
x$ do have fixed--points; $\textbf{true}$ or $\textbf{false}$ for the  former and the
sequences $\langle \frak{ff},\frak{tt},\frak{ff},\frak{tt},\frak{ff},\frak{tt},\cdots\rangle$ or $\langle
\frak{tt},\frak{ff},\frak{tt},\frak{ff},\frak{tt},\frak{ff},\cdots\rangle$ for the latter (see \cite{ber09}).

\section{Other Versions of Yablo's Paradox}
Yablo's paradox comes in several varieties \cite{yab3}; here  we show that other versions of Yablo's paradox   become interesting theorems in LTL as well.

\bigskip 

 \quad (\textsf{always}): \hspace{4em} $\mathcal{Y}_n \iff \forall\,i>n\; (\mathcal{Y}_i \textrm{ is not true })$. 
 
 \smallskip 

\quad  (\textsf{sometimes}): \hspace{2.25em} $\mathcal{Y}_n \iff  \exists\,i>n\; (\mathcal{Y}_i \textrm{ is not true })$.

 \smallskip 
 
\quad  (\textsf{almost always}): \hspace{0.85em} $\mathcal{Y}_n \iff  \exists\,i>n\; \forall j\geq i\; (\mathcal{Y}_i \textrm{ is not true })$.

 \smallskip 
 
 \quad (\textsf{infinitely often}): \hspace{0.6em} $\mathcal{Y}_n \iff  \forall\,i>n\; \exists j\geq i\; (\mathcal{Y}_i \textrm{ is not true })$.

\bigskip 

\noindent It can be seen that all the sequences $\{\mathcal{Y}_n\}_{n\in\mathbb{N}}$ of sentences above are paradoxical.  These sequences of sentences can be formalized in LTL as follows:

\bigskip 

 \quad (\textsf{always}): \hspace{4em} $\mathscr{Y}\!\longleftrightarrow\!\Circle\Box\neg
\mathscr{Y} \quad   \big(\!\!\longleftrightarrow\!\Box\Circle\neg\mathscr{Y}\!\longleftrightarrow\!\Box\neg\Circle\mathscr{Y}\big)$.

\smallskip

\quad  (\textsf{sometimes}): \hspace{2.25em} $\mathscr{Y}\!\longleftrightarrow\!\Circle\diamondsuit\neg
\mathscr{Y} \quad  \big(\!\!\longleftrightarrow\!\diamondsuit\Circle\neg\mathscr{Y}\!\longleftrightarrow\!\diamondsuit
\neg\Circle\mathscr{Y}\big)$.

 \smallskip 

\quad  (\textsf{almost always}): \hspace{0.85em} $\mathscr{Y}\!\longleftrightarrow\!\Circle\diamondsuit\Box\neg
\mathscr{Y} \quad  \big(\!\!\longleftrightarrow\!\diamondsuit\Circle\Box\neg\mathscr{Y}\!\longleftrightarrow
\!\diamondsuit\Box\Circle\neg\mathscr{Y}\!\longleftrightarrow\!\diamondsuit\Box
\neg\Circle\mathscr{Y}\big)$.

 \smallskip 
 
 \quad (\textsf{infinitely often}): \hspace{0.6em} $\mathscr{Y}\!\longleftrightarrow\!\Circle\Box\diamondsuit\neg
\mathscr{Y} \quad  \big(\!\!\longleftrightarrow\!\Box\Circle\diamondsuit\neg\mathscr{Y}\!\longleftrightarrow
\!\Box\diamondsuit\Circle\neg\mathscr{Y}\!\longleftrightarrow\!\Box\diamondsuit\neg\Circle\mathscr{Y}\big)$.

\bigskip 

\noindent  The following (\textsf{sometimes}) counterpart of Theorem~\ref{thm-yab1} directly follows.

\begin{theorem}\label{thm-yab2} {\rm LTL} $\models \neg \Box (\varphi\leftrightarrow\Circle\diamondsuit\neg\varphi)$.
\end{theorem}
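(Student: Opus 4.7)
The plan is to copy the structure of the proof of Theorem~\ref{thm-yab1}, replacing the inner $\Box$ by $\diamondsuit$ and adapting the Yablo reasoning to its ``existential'' dual. By Theorem~\ref{th2.1.9Tempo} it suffices to show that $\Box(\varphi\leftrightarrow\Circle\diamondsuit\neg\varphi)$ is not satisfiable. So I would assume, toward a contradiction, that some Kripke structure $\mathcal{K}$ and some $n\in\mathbb{N}$ satisfy $\mathcal{K}_n\bigl(\Box(\varphi\leftrightarrow\Circle\diamondsuit\neg\varphi)\bigr)=\frak{tt}$ and then unfold clauses 04 and 11 of the semantics to obtain the key biconditional: for every $i\geq n$, $\mathcal{K}_i(\varphi)=\frak{tt}$ iff there exists $k\geq i+1$ with $\mathcal{K}_k(\varphi)=\frak{ff}$.

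The next step is a dichotomy on whether $\varphi$ ever holds from step $n$ on, exactly paralleling cases (1) and (2) in the proof of Theorem~\ref{thm-yab1}. In case (1), some $j\geq n$ satisfies $\mathcal{K}_j(\varphi)=\frak{tt}$; instantiating the biconditional at $i=j$ produces a $k\geq j+1$ with $\mathcal{K}_k(\varphi)=\frak{ff}$, and instantiating it again at $i=k$ (legal since $k\geq n$) forces $\mathcal{K}_\ell(\varphi)=\frak{tt}$ for every $\ell\geq k+1$; one more instantiation at $i=k+1$ then demands some $\ell'\geq k+2$ with $\mathcal{K}_{\ell'}(\varphi)=\frak{ff}$, which is the desired contradiction. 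Case (2), in which $\mathcal{K}_j(\varphi)=\frak{ff}$ for every $j\geq n$, is immediate: the biconditional at $i=n$ forbids any $k\geq n+1$ from carrying the value $\frak{ff}$, which is contradicted at once by $k=n+1$.

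The main obstacle, in contrast with the $\Box$--version, is that a single instantiation of the biconditional no longer collapses the entire tail of $\mathcal{K}$ to a constant value; instead I have to chase two $\diamondsuit$--witnesses in succession and rely on the outer $\Box$ to invoke the biconditional at a third index, namely $k+1$. Keeping this bookkeeping straight is essentially the only substantive point. Once the iteration is laid out, the reasoning is simply the alternation ``true $\Rightarrow$ later false $\Rightarrow$ all subsequent true $\Rightarrow$ later false'' that drives Yablo's original paradox, transposed into the ``sometimes'' idiom.
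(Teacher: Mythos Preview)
Your argument is correct: the two-case semantic chase works exactly as you describe, and the bookkeeping in case~(1) (three successive instantiations at $j$, $k$, and $k+1$) is sound. However, your route is genuinely different from the paper's. The paper does not re-run a Yablo-style semantic argument at all; instead it observes that Theorem~\ref{thm-yab1} already holds for \emph{every} formula $\psi$, substitutes $\psi=\neg\varphi$, and then uses the LTL equivalences $\Circle\Box\neg\neg\varphi\leftrightarrow\Circle\neg\diamondsuit\neg\varphi\leftrightarrow\neg\Circle\diamondsuit\neg\varphi$ together with contraposition to rewrite $\neg\varphi\leftrightarrow\Circle\Box\neg\neg\varphi$ as $\varphi\leftrightarrow\Circle\diamondsuit\neg\varphi$. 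That gives the result in one line as a pure duality corollary of Theorem~\ref{thm-yab1}. Your proof buys independence from Theorem~\ref{thm-yab1} and makes the ``sometimes'' paradox visible on its own terms; the paper's proof buys brevity and exposes that the \textsf{always} and \textsf{sometimes} versions are formally dual rather than requiring separate paradoxical reasoning.
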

\begin{proof}
By Theorem~\ref{thm-yab1} we have {\rm LTL} $\models \neg \Box (\psi\leftrightarrow\Circle\Box\neg\psi)$ for any arbitrary formula $\psi$. In particular for $\psi=\neg\varphi$ we have  {\rm LTL} $\models\neg\Box (\neg\varphi\leftrightarrow\Circle\Box\neg\neg\varphi\leftrightarrow\Circle\neg\diamondsuit\neg\varphi
\leftrightarrow\neg\Circle\diamondsuit\neg\varphi)$, whence for any $\varphi$ we conclude that  {\rm LTL} $\models\neg\Box (\varphi\leftrightarrow\Circle\diamondsuit\neg\varphi)$.
\end{proof}

Let us focus now on the ``\textsf{almost always}'' version of Yablo's paradox. Let $Y_0,Y_1,Y_2,...$ be a sequence of sentences that each sentence, roughly speaking, says ``all sentences, except finitely many, after this sentence are false''. Mathematically, this sequence is as below:

\begin{eqnarray*}
&&Y_0\; : \quad \exists \, i>0 \; \forall j\geq i \; (\mathcal{Y}_j \textrm{ is not true }).\\
&&Y_1\; : \quad \exists \, i>1 \; \forall j\geq i \; (\mathcal{Y}_j \textrm{ is not true }).\\
&&Y_2\; : \quad \exists \, i>2 \; \forall j\geq i \; (\mathcal{Y}_j \textrm{ is not true }).\\
&& \   \vdots \;   \qquad  \qquad \qquad    \qquad \vdots
\end{eqnarray*}

The paradox arises when we try to assign truth values in a consistent way to all   $Y_i$'s. Assume for a moment  that there is a sentence (say) $Y_n$ which is true; so there exists $i>n$ for which all $Y_j$ with $j\geq i$ are untrue. In particular, $Y_{i}$ is  untrue. Since  all the sentences $Y_{i+1}, Y_{i+2},...$ are untrue, so $Y_i$ has to be true. Therefore, $Y_i$ is  true and false the same time, which is a contradiction. Whence, all $Y_n$'s are untrue, so $Y_0$ is true, a contradiction again. Now we turn this version of Yablo's paradox to a theorem in LTL.

\begin{theorem}\label{thm-yab3} {\rm LTL} $\models \neg \Box (\varphi\leftrightarrow\Circle\diamondsuit\Box\neg\varphi)$.
\end{theorem}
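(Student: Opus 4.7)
The plan is to mirror the proof of Theorem~\ref{thm-yab1}: apply Theorem~\ref{th2.1.9Tempo} to reduce the problem to showing that $\Box(\varphi\leftrightarrow\Circle\diamondsuit\Box\neg\varphi)$ is not satisfiable, then derive a contradiction following the ``almost always'' variant of Yablo's argument. Assume for contradiction that $\mathcal{K}_n(\Box(\varphi\leftrightarrow\Circle\diamondsuit\Box\neg\varphi))=\frak{tt}$ for some Kripke structure $\mathcal{K}$ and some $n\in\mathbb{N}$. Unfolding the outer $\Box$ together with the semantics of $\Circle$ yields, for every $i\geq n$, the identity $\mathcal{K}_i(\varphi)=\mathcal{K}_{i+1}(\diamondsuit\Box\neg\varphi)$, which I take as the driving equation.

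The key observation --- the LTL counterpart of the informal step in Yablo's ``almost always'' reasoning --- is the forward propagation of $\Box$: if $\mathcal{K}_k(\Box\neg\varphi)=\frak{tt}$ then $\mathcal{K}_{k+1}(\Box\neg\varphi)=\frak{tt}$, and consequently $\mathcal{K}_{k+1}(\diamondsuit\Box\neg\varphi)=\frak{tt}$. With this in hand I would split on whether $\varphi$ is ever true from stage $n$ onward. In the first case, some $j\geq n$ satisfies $\mathcal{K}_j(\varphi)=\frak{tt}$; the driving equation at $i=j$ supplies a witness $k\geq j+1$ with $\mathcal{K}_k(\Box\neg\varphi)=\frak{tt}$; then on the one hand $\mathcal{K}_k(\varphi)=\frak{ff}$, while on the other hand the propagation lemma combined with the driving equation at $i=k$ forces $\mathcal{K}_k(\varphi)=\mathcal{K}_{k+1}(\diamondsuit\Box\neg\varphi)=\frak{tt}$, a contradiction. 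In the second case, $\mathcal{K}_j(\varphi)=\frak{ff}$ for every $j\geq n$, so $\mathcal{K}_n(\Box\neg\varphi)=\frak{tt}$; propagation gives $\mathcal{K}_{n+1}(\diamondsuit\Box\neg\varphi)=\frak{tt}$, and the driving equation at $i=n$ yields $\mathcal{K}_n(\varphi)=\frak{tt}$, contradicting the case assumption.

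The only real obstacle is bookkeeping: the combination $\diamondsuit\Box\neg\varphi$ hides an $\exists\forall$ quantifier pattern, so one must carefully track the witness indices $j$, $k$, and $n+1$ and verify that each satisfies the inequality needed to invoke the driving equation at that position. Once this is handled, the proof is a clean transposition of the ``almost always'' Yablo paradox into LTL, parallel in spirit to the translations carried out in Theorem~\ref{thm-yab1} and Theorem~\ref{thm-yab2}.
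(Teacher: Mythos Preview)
Your proposal is correct and follows essentially the same route as the paper: reduce via Theorem~\ref{th2.1.9Tempo} to unsatisfiability, unfold the semantics to obtain the driving equation $\mathcal{K}_i(\varphi)=\mathcal{K}_{i+1}(\diamondsuit\Box\neg\varphi)$ for $i\geq n$, and split on whether $\varphi$ is ever true from stage $n$. Your isolated propagation observation ($\mathcal{K}_k(\Box\neg\varphi)=\frak{tt}\Rightarrow\mathcal{K}_{k+1}(\diamondsuit\Box\neg\varphi)=\frak{tt}$) makes Case~(1) slightly more direct than the paper's version, which instead chases two further witness indices before reaching the contradiction, but the structure is the same.
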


\begin{proof}
We show that the formula  $\Box (\varphi\leftrightarrow \Circle\diamondsuit\Box\neg\varphi)$ is not satisfiable in  LTL. For a moment, assume that there is a Kripke structure ${\cal{K}}$ and a state $n\in\Bbb{N}$ for which ${\cal{K}}_n\big(\Box(\varphi\leftrightarrow \Circle\diamondsuit\Box\neg \varphi)\big) = \frak{tt}$. So, we have $\forall i\geq n \; {\cal{K}}_i(\varphi\leftrightarrow \Circle\diamondsuit\Box\neg \varphi)=\frak{tt}$ which implies $\forall i\geq n \; {\cal{K}}_i(\varphi) = {\cal{K}}_i(\Circle\diamondsuit\Box\neg \varphi)$ which is equivalent to $\forall i\geq n \; \exists j\geq0 \; {\cal{K}}_i(\varphi) = {\cal{K}}_{i+j+1}(\Box\neg \varphi)$.

\;(1)\; If there is  some $l\geq n$ such that ${\cal K}_l(\varphi)=\frak{tt}$, then  ${\cal K}_{l+m+1}(\Box\neg\varphi)=\frak{tt}$   for some $m$; so ${\cal K}_{l+m+1}(\varphi)=\frak{ff}$  and also ${\cal K}_{k}(\varphi)=\frak{ff}$ for all $k\geq l+m+1$. On the other hand there must exist some $p\geq 0$ such that ${\cal K}_{l+m+1+p+1}(\Box\neg\varphi)=\frak{ff}$ which implies that ${\cal K}_{l+m+1+p+1+q}(\varphi)=\frak{tt}$ for some $q\geq 0$. This is a contradiction since $l+m+1+p+1+q\geq l+m+1$.

\;(2)\; If  ${\cal K}_l(\varphi)=\frak{ff}$ holds for  all $l\geq n$, then  in particular ${\cal K}_n(\varphi)=\frak{ff}$ and so there exists some $m\geq 0$ such that ${\cal{K}}_{n+m+1}(\Box\neg \varphi)=\frak{ff}$; whence  ${\cal{K}}_{n+m+1+p}(\varphi)=\frak{tt}$ for some $p\geq 0$, which contradicts (1) above.
\end{proof}

Again by the technique of the proof of Theorem~\ref{thm-yab2} we can deduce the following  from Theorem~\ref{thm-yab3}.

\begin{theorem}\label{thm-yab4}  {\rm LTL} $\models \neg \Box (\varphi\leftrightarrow \Circle\Box\diamondsuit \neg\varphi)$.
\end{theorem}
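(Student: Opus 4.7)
The plan is to mimic the reduction used to derive Theorem~\ref{thm-yab2} from Theorem~\ref{thm-yab1}: apply Theorem~\ref{thm-yab3} to a negated formula and then manipulate the inner biconditional using the LTL laws $\Circle\neg\psi\leftrightarrow\neg\Circle\psi$ and $\Box\psi\leftrightarrow\neg\diamondsuit\neg\psi$ until the right-hand side has exactly the shape $\Circle\Box\diamondsuit\neg\varphi$.

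Concretely, by Theorem~\ref{thm-yab3} applied to an arbitrary formula $\psi$, we have LTL $\models\neg\Box(\psi\leftrightarrow\Circle\diamondsuit\Box\neg\psi)$. First I would substitute $\psi=\neg\varphi$, obtaining LTL $\models\neg\Box(\neg\varphi\leftrightarrow\Circle\diamondsuit\Box\neg\neg\varphi)$, and simplify the double negation to get $\Circle\diamondsuit\Box\varphi$ on the right. Next I would rewrite $\Box\varphi$ as $\neg\diamondsuit\neg\varphi$ to reach $\Circle\diamondsuit\neg\diamondsuit\neg\varphi$; applying duality again, $\diamondsuit\neg\diamondsuit\neg\varphi\leftrightarrow\diamondsuit\Box\varphi$, which is not what we want — so the cleaner move is to instead rewrite the \emph{outer} modality: $\Box\diamondsuit\neg\varphi\leftrightarrow\neg\diamondsuit\Box\varphi$ (since $\Box\chi\leftrightarrow\neg\diamondsuit\neg\chi$ with $\chi=\diamondsuit\neg\varphi$). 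Hence $\Circle\Box\diamondsuit\neg\varphi\leftrightarrow\Circle\neg\diamondsuit\Box\varphi\leftrightarrow\neg\Circle\diamondsuit\Box\varphi$, using commutation of $\Circle$ with $\neg$.

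Putting these together, the biconditional $\neg\varphi\leftrightarrow\Circle\diamondsuit\Box\varphi$ supplied by Theorem~\ref{thm-yab3} is logically equivalent (at every state of every Kripke structure) to $\varphi\leftrightarrow\neg\Circle\diamondsuit\Box\varphi$, and therefore to $\varphi\leftrightarrow\Circle\Box\diamondsuit\neg\varphi$. Since equivalent formulas have equivalent $\Box$-closures in LTL, the validity of $\neg\Box(\neg\varphi\leftrightarrow\Circle\diamondsuit\Box\varphi)$ transfers to the validity of $\neg\Box(\varphi\leftrightarrow\Circle\Box\diamondsuit\neg\varphi)$, which is exactly the statement to be proved.

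I do not anticipate a real obstacle here: the whole argument is a bookkeeping exercise in LTL equivalences (the $\Circle/\neg$ commutation law T1 and the $\Box/\diamondsuit$ duality already invoked in the paper), followed by a single application of Theorem~\ref{thm-yab3}. The only point requiring a little care is the order of the negation-duality rewrites, to ensure the outer $\Circle$ ends up in front of a $\Box\diamondsuit\neg\varphi$ rather than a $\diamondsuit\Box\neg\neg\varphi$; otherwise the proof is a one-liner once the rewriting is done correctly.
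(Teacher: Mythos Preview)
Your proposal is correct and follows exactly the paper's own proof: substitute $\psi=\neg\varphi$ into Theorem~\ref{thm-yab3} and unwind via the LTL laws $\Circle\neg\leftrightarrow\neg\Circle$ and $\Box/\diamondsuit$ duality to turn $\neg\varphi\leftrightarrow\Circle\diamondsuit\Box\varphi$ into $\varphi\leftrightarrow\Circle\Box\diamondsuit\neg\varphi$. The only cosmetic difference is the order of the duality rewrites; the argument is the same one-liner.
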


\begin{proof}
{\rm LTL} $\models \neg \Box (\psi\leftrightarrow\Circle\diamondsuit\Box\neg\psi)$ holds for any   formula $\psi$ by Theorem~\ref{thm-yab3}. For $\psi=\neg\varphi$ we obtain  the deduction {\rm LTL} $\models\neg\Box (\neg\varphi\leftrightarrow\Circle\diamondsuit\Box\neg\neg\varphi\leftrightarrow\Circle\neg\Box\diamondsuit\neg\varphi
\leftrightarrow\neg\Circle\Box\diamondsuit\neg\varphi)$ which completes the proof.
\end{proof}





%
%




\end{document}